\documentclass[11pt]{article}

\usepackage[margin=0.7in]{geometry}
\usepackage[T1]{fontenc}
\usepackage[utf8]{inputenc}
\usepackage{lmodern}
\usepackage{microtype}
\usepackage{amsmath,amssymb}
\usepackage{graphicx}
\usepackage{subcaption}
\usepackage{xcolor}
\usepackage[colorlinks=true,allcolors=blue]{hyperref}
\usepackage[round,authoryear]{natbib}

\usepackage{amsthm}
\newtheorem{theorem}{Theorem}
\newtheorem{proposition}{Proposition}
\newtheorem{lemma}{Lemma}

\title{Climate-model factor separation with Shapley values and efficient sampling}
\author{Abel Jansma}
\date{Dutch Institute for Emergent Phenomena\\
Institute of Physics \& Institute for Logic, Language and Computation\\
University of Amsterdam\\\today}

\begin{document}

\maketitle

\begin{abstract}
Climate models often feature nonlinear responses to changes in model parameters or boundary conditions. Factor separation asks how much of the resulting change should be assigned to altered factors and their interactions. \citet{lunt2021multi} showed that two factor attribution methods, the \emph{linear-sum} and \emph{shared-interaction} methods, coincide for $n\le 4$ factors and conjectured this holds for arbitrary $n$. We show that the linear-sum and shared-interaction formulas are respectively the permutation and Harsanyi-dividend representations of the Shapley value. Their equality therefore follows from a classical theorem in cooperative game theory. To our knowledge, this is the first explicit identification of the Stein--Alpert and Lunt climate-model factor-separation methods with Möbius/Harsanyi coefficients and Shapley values. The result imports many established and efficient Shapley sampling methods into climate-model experimental design, and extends to vector-valued responses. 
\end{abstract}

\section{Introduction}

Climate-model experiments and simulations often compare a baseline state with a perturbed state in which several conditions or model parameters have changed. The method of factor separation, introduced by \citet{stein1993factor}, asks how much of the resulting change in prediction should be attributed to each altered factor and to interactions among factors. Examples include separating the effects on temperature of atmospheric $\mathrm{CO}_2$, ice sheets, orography, vegetation, land cover, and emissions.

When the system's response is nonlinear, the effect assigned to one factor depends on which other factors have already been changed. Studying all possible combinations of $n$ factors thus requires $2^n$ simulations, introduces $2^n$ model responses, and defines an $n$-dimensional cube of model states. This makes the attribution problem combinatorially complex and difficult to interpret.

\citet{lunt2021multi} proposed three extended methods for multivariate factorisation of numerical simulations: linear-sum, shared-interaction, and scaled-residual. They distinguished four desirable properties of an attribution: completeness, meaning that the factor contributions sum to the total change; purity, meaning that no separate residual term is required; uniqueness, meaning that the result does not depend on an arbitrarily selected path; and a reversal property that they call symmetry, meaning that reversing the endpoint comparison reverses the signs of the attributed effects. The present paper identifies only the linear-sum and shared-interaction methods with Shapley values. The linear-sum method averages over all monotone paths through the cube of model states. The shared-interaction method calculates the Stein--Alpert interaction terms and divides each term equally among the factors that generate it. Lunt et al.\ showed that these two methods coincide for up to four factors, with equality for arbitrary $n$ stated as a conjecture.

We prove that conjecture. The key is to distinguish two related tasks: (i) \emph{decomposition}, meaning the identification of all $2^n$ factor interaction terms; and (ii) \emph{attribution}, meaning the assignment of $n$ attribution values to the individual factors.

The decomposition task is solved by Möbius inversion on the lattice of factor subsets, which rewrites the Stein--Alpert interactions. The attribution task is solved by Shapley values. It is a classical result that Shapley values can equivalently be computed either by averaging marginal contributions over player orderings or by dividing each Möbius coefficient, or Harsanyi dividend, equally among the members of its coalition. Our contribution is to identify the linear-sum and shared-interaction factorisations of \citet{lunt2021multi} with these two representations, thereby settling their conjecture and importing established Shapley-value sampling and approximation methods into this part of the factor-separation literature.

\section{Related work and scope of the contribution}

\citet{cleveland2020factor} connect Stein--Alpert factor separation to the design-of-experiments literature. They relate factor effects in numerical simulations to simple effects, regression coding, reduced experimental designs, and factors with more than two levels. That work clarifies the statistical structure of factor-separation experiments, but it does not make the specific identification between the linear-sum/shared-interaction formulas and the Shapley value.

Shapley values do have prior applications in the climate and energy literature. \citet{albrecht2002shapley} use Shapley values to study effects on historical carbon emissions, and \citet{alkhourdajie2024climate} use a Shapley-based attribution method to analyse energy sector indicators in climate mitigation scenarios. In the field of sensitivity analysis, \citet{owen2014sobol} and \citet{song2016shapley} developed Shapley effects, and in the field of machine learning, Shapley values are widely used to interpret model predictions \citep{lundberg2017unified}. 

\section{Stein--Alpert factor separation as Boolean calculus}

Let $\mathcal{N}=\{1,\ldots,n\}$ be the set of binary factors, and let $F:2^{\mathcal{N}}\to V$ be the raw model response, where $V$ is a real vector space. The baseline-normalised response is
\begin{equation}
    v(S)=F(S)-F(\emptyset),
    \qquad S\subseteq\mathcal{N},
\end{equation}
so $v(\emptyset)=0$. For the scalar climate variables emphasised by \citet{stein1993factor} and \citet{lunt2021multi}, one may take $V=\mathbb{R}$.

\citet{stein1993factor} propose that the model response to the presence of a subset $S\subseteq\mathcal{N}$ of factors can be decomposed into irreducible contributions $\hat{v}(T)$ as follows:

\begin{align}
    v(S) = \sum_{T\subseteq S} \hat{v}(T). \label{eq:sum_of_subsets}
\end{align}
This rewrites their Taylor-series equation (10) as a function over sets, rather than indices, to cast it in the language of quantitative mereology \citep{jansma2025mereological}. To derive the interaction coefficients $\hat{v}(T)$, \citet{stein1993factor} recursively solve the associated system of equations. Equivalently, one can apply the Möbius inversion theorem to \eqref{eq:sum_of_subsets} and obtain a closed-form expression for the interaction coefficients \citep{rota1964foundations}:
\begin{align}
    \hat{v}(S) = \sum_{T\subseteq S} (-1)^{|S| - |T|} v(T),
    \label{eq:mobius}
\end{align}
which matches their Equation (16). This also makes the completeness property of the decomposition explicit, since the Möbius inversion theorem guarantees that the decomposition is unique, invertible, and satisfies \eqref{eq:sum_of_subsets} exactly. 

To go from decomposition to attribution, we turn to cooperative game theory, where the function $v$ is called a game's characteristic-, or value-function and the irreducible contributions $\hat{v}$ are called \emph{Harsanyi dividends} \citep{harsanyi1959bargaining}. The Shapley attribution to factor $i$ can be written in dividend form as
\begin{align}
    \phi_i(v) = \sum_{\{i\}\subseteq S\subseteq \mathcal{N}} \frac{\hat{v}(S)}{|S|}. \label{eq:shap_synergy}
\end{align}
This coincides with the \emph{shared-interaction} factorisation $\Delta T_i(v)$ introduced in Equation (13) of \citet{lunt2021multi}. In the scalar case, the Shapley value is characterised by efficiency, additivity or linearity, the null-player/dummy axiom, and player-exchange symmetry \citep{shapley1953value}. This player-exchange symmetry is distinct from the endpoint-reversal antisymmetry that \citet{lunt2021multi} call symmetry. To make the latter explicit, let
\begin{equation}
    v^{\mathrm{rev}}(S)=v(\mathcal{N}\setminus S)-v(\mathcal{N})
    \label{eq:reversed_game}
\end{equation}
denote the baseline-normalised response obtained by exchanging the baseline and fully perturbed endpoints. An attribution rule $A_i$ satisfies endpoint-reversal antisymmetry when
\begin{equation}
    A_i(v^{\mathrm{rev}})=-A_i(v)
    \qquad\text{for all }i.
    \label{eq:endpoint_reversal}
\end{equation}
Lunt et al.'s four properties are therefore not, by themselves, a uniqueness characterization of the Shapley value. Under the Shapley characterization, the shared-interaction attribution satisfies:
\begin{itemize}
    \item \emph{Efficiency}: $\sum_{i=1}^{n}\phi_i(v)=v(\mathcal{N})$.
    \item \emph{Linearity}: $\phi_i(av+bw)=a\phi_i(v)+b\phi_i(w)$ for responses $v,w$ and scalars $a,b$.
    \item \emph{Null player/factor}: if $v(S\cup\{i\})=v(S)$ for all $S\subseteq\mathcal{N}\setminus\{i\}$, then $\phi_i(v)=0$.
    \item \emph{Player-exchange symmetry}: if $v(S\cup\{i\})=v(S\cup\{j\})$ for all $S\subseteq\mathcal{N}\setminus\{i,j\}$, then $\phi_i(v)=\phi_j(v)$.
\end{itemize}

\section{Linear-sum factorisation as Shapley values}
\citet{lunt2021multi} also proposed the \emph{linear-sum} method (their Equation (8)). Following their construction: let $A_i$ denote the collection of all subsets of $\mathcal{N}$ containing $i$, and let $B_i$ denote the collection of all subsets of $\mathcal{N}$ not containing $i$, ordered such that their $j$-th elements differ only by the presence of $i$: $B_i^j\cup \{i\} = A_i^j$. Then the linear-sum attribution is defined as the following weighted average of marginal effects:
\begin{align}
    \Delta T_i(v) = \frac{1}{n!} \sum_{j=1}^{2^{n-1}} |B_i^j|!(n-1-|B_i^j|)!\left( v(A_i^j) - v(B_i^j) \right).
\end{align}
The sum over $j$ can be written as a sum over subsets not containing $i$, yielding the equivalent expression
\begin{align}
    \Delta T_i(v) = \sum_{S\subseteq \mathcal{N}\setminus \{i\}} \frac{|S|!(n-1-|S|)!}{n!} \left( v(S\cup \{i\}) - v(S) \right). \label{eq:shap_orig}
\end{align}
This is exactly the original definition of the Shapley value $\phi_i(v)$ \citep{shapley1953value}, and therefore coincides with the shared-interaction method. This proves the conjecture of \citet{lunt2021multi} as a corollary of the equivalence of \eqref{eq:shap_orig} and \eqref{eq:shap_synergy}: the linear-sum and shared-interaction factor attributions are equivalent for arbitrary $n$. In game theory, this equivalence is well established, but for completeness we give a proof below.

\begin{theorem}[Conjectured by \citet{lunt2021multi}]
\label{thm:main}
    Let $\mathcal{N}=\{1,\ldots,n\}$ be a set of $n$ binary factors, let $V$ be a real vector space, and let $v:2^{\mathcal{N}}\to V$ be the baseline-normalised response. Then the linear-sum attribution and the shared-interaction attribution of $v$ are equivalent for arbitrary $n$:
    \begin{align}
        \Delta T_i(v) = \sum_{S\subseteq \mathcal{N}\setminus \{i\}} \frac{|S|!(n-1-|S|)!}{n!} \left( v(S\cup \{i\}) - v(S) \right) = \sum_{\{i\}\subseteq S\subseteq \mathcal{N}} \frac{\hat{v}(S)}{|S|} = \phi_i(v),
    \end{align}
    where $\hat{v}(S) = \sum_{T\subseteq S} (-1)^{|S| - |T|} v(T)$ is the Möbius inversion of $v$ over the power set of $\mathcal{N}$.
\end{theorem}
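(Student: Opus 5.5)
Our plan is to start from the dividend form on the right and reduce it to the marginal-contribution form on the left. We substitute the Möbius formula \eqref{eq:mobius} for $\hat v(S)$ into $\sum_{S\ni i}\hat v(S)/|S|$ and exchange the order of summation. This writes the expression as $\sum_{T\subseteq\mathcal N} c_i(T)\,v(T)$ with scalar coefficients
\begin{equation*}
c_i(T)=\sum_{S\supseteq T\cup\{i\}}\frac{(-1)^{|S|-|T|}}{|S|}.
\end{equation*}
Every manipulation is a finite linear combination of vectors in $V$ with real coefficients, so the argument holds verbatim for vector-valued $v$; no scalar structure of $V$ is needed.

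Next we evaluate $c_i(T)$ in two cases, according to whether $i\in T$. Write $t=|T\cup\{i\}|$. The sets $S\supseteq T\cup\{i\}$ of size $s$ number $\binom{n-t}{s-t}$. We then use the Beta-integral identity $1/s=\int_0^1 x^{s-1}\,dx$, which turns the alternating binomial sum into
\begin{equation*}
\pm\int_0^1 x^{t-1}(1-x)^{n-t}\,dx=\pm\frac{(t-1)!\,(n-t)!}{n!}.
\end{equation*}
For $i\in T$ with $|T|=k$ we have $t=k$ and sign $+$, which gives $\frac{(k-1)!(n-k)!}{n!}$. Setting $S'=T\setminus\{i\}$ with $|S'|=k-1$, this is exactly the weight $\frac{|S'|!(n-1-|S'|)!}{n!}$ multiplying $v(S'\cup\{i\})$ in \eqref{eq:shap_orig}. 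For $i\notin T$ with $|T|=k$ we have $t=k+1$, and the extra element gives an overall factor $(-1)$. This yields $-\frac{k!(n-k-1)!}{n!}$, which is exactly the weight multiplying $-v(T)$ in \eqref{eq:shap_orig}.

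Matching these coefficients proves the middle equality for every $v$. The outer equalities are definitional. The first is the rewriting of $\Delta T_i$ as a sum over $S\subseteq\mathcal N\setminus\{i\}$, which the paper already carried out. The last is the identification with $\phi_i$ in \eqref{eq:shap_synergy}. The main obstacle is the sign and index bookkeeping in the alternating sum, especially the case $i\notin T$.

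As a cross-check, or an alternative route if the computation becomes messy, we can argue by linearity. Both sides are linear in $v$, and the unanimity games $u_R(S)=[R\subseteq S]$ for $R\neq\emptyset$ form a basis, with $v=\sum_R\hat v(R)u_R$. It therefore suffices to check that the permutation form gives $1/|R|$ to each $i\in R$ and $0$ otherwise. The marginal contribution of $i$ is $1$ exactly when $i$ is the last element of $R$ to arrive in a uniformly random ordering, and this happens with probability $1/|R|$.
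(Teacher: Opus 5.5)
Your proof is correct. Your primary route is genuinely different from the paper's, while your fallback is essentially the paper's argument.

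The paper works from the permutation side:
\begin{itemize}
\item it rewrites the subset-weighted sum as an average over all $n!$ orderings;
\item it expands each marginal contribution $v(P_i^\pi\cup\{i\})-v(P_i^\pi)$ into the dividends $\hat v(S)$ with $\{i\}\subseteq S\subseteq P_i^\pi\cup\{i\}$;
\item it notes that a fixed $S\ni i$ survives in a fraction $1/|S|$ of orderings, namely those in which $i$ is the last member of $S$.
\end{itemize}
That is the same last-arrival count you use for unanimity games, just without passing through a basis.

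Your main argument runs in the opposite direction. You substitute the M\"obius formula into the dividend form, and $1/s=\int_0^1x^{s-1}\,dx$ collapses each $c_i(T)$ to $\pm\int_0^1x^{t-1}(1-x)^{n-t}\,dx$. Your sign and index bookkeeping is right in both cases, and the same formulas hold at the boundary cases $T=\emptyset$ and $T=\mathcal N$. Since each $v(T)$ occurs exactly once in \eqref{eq:shap_orig}, matching coefficients is a complete proof.

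What your route buys is that you never need the permutation representation, nor the count $|S|!\,(n-1-|S|)!$ of orderings with predecessor set $S$. It also \emph{derives} the Shapley weights as Beta integrals rather than merely verifying them. This is the same mechanism behind Owen's multilinear-extension (diagonal) formula for the Shapley value.

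What the paper's route buys is brevity and interpretation. The factor $1/|S|$ appears as a last-arrival probability. The proof also passes through exactly the permutation expectation that Section~\ref{sec:sampling} then estimates by Monte Carlo.
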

\begin{proof}
    First note that the linear-sum attribution can be written as a sum over all permutations $\pi$ of $\mathcal{N}$, which is how \citet{lunt2021multi} originally motivated it. Given a permutation $\pi$, let $P_i^\pi$ denote the set of factors that precede $i$ in the ordering $\pi$. Then we can write
    \begin{align}
        \Delta T_i(v)&= \sum_{S\subseteq \mathcal{N}\setminus \{i\}} \frac{|S|!(n-1-|S|)!}{n!} \left( v(S\cup \{i\}) - v(S) \right)\\
        &= \frac{1}{n!} \sum_{\pi\in \mathrm{Perm}(\mathcal{N})} \left( v(P_i^\pi\cup \{i\}) - v(P_i^\pi) \right).
    \end{align}
    We can then apply the Möbius inversion theorem to write
    \begin{align}
        v(P_i^\pi\cup\{i\})-v(P_i^\pi)
        = \sum_{S:\{i\}\subseteq S\subseteq P_i^\pi\cup\{i\}} \hat{v}(S).
    \end{align}
    For a fixed $S$ that contains $i$, and a uniformly chosen permutation $\pi$, the probability that $S\subseteq P_i^\pi\cup\{i\}$ is exactly $1/|S|$, since all members of $S$ are equally likely to be the last member of $S$ in the ordering $\pi$. Therefore, we can write
    \begin{align}
        \Delta T_i(v) &= \frac{1}{n!} \sum_{\pi\in \mathrm{Perm}(\mathcal{N})} \sum_{S:\{i\}\subseteq S\subseteq P_i^\pi\cup\{i\}} \hat{v}(S)\\
        &= \sum_{\{i\}\subseteq S\subseteq \mathcal{N}} \frac{\hat{v}(S)}{|S|},
    \end{align}
    which is exactly the shared-interaction attribution $\phi_i(v)$.
\end{proof}

\section{Permutation sampling of Shapley factor attributions}
\label{sec:sampling}

The equivalence established in Theorem~\ref{thm:main} has an immediate computational consequence. Exact recovery of the complete Möbius decomposition generally requires the full set of $2^n$ model responses, whereas the permutation representation expresses each factor attribution as an expectation over random factor orderings. The latter can therefore be estimated without reconstructing the individual interaction dividends. More precisely, a particular Möbius coefficient $\hat v(S)$ is the alternating sum in~\eqref{eq:mobius} over the $2^{|S|}$ subsets of $S$, while the full shared-interaction representation~\eqref{eq:shap_synergy} uses all nonempty coefficients. The computational advantage of sampling is therefore an advantage for factor-level attribution, not for reconstructing the complete interaction decomposition. In climate applications, where each evaluation of $v$ may be a full simulation, exact calculations become prohibitive as the number or cost of simulations increases.

Write the marginal contribution of factor $i$ under a permutation $\pi$ as
\begin{equation}
  \Delta_i(\pi) \;=\; v\!\left(P_i^{\pi}\cup\{i\}\right) - v\!\left(P_i^{\pi}\right),
  \label{eq:marginal}
\end{equation}
where $P_i^{\pi}$ is the set of factors preceding $i$ in $\pi$, as in the proof of Theorem~\ref{thm:main}. Then $\Delta T_i(v)=\mathbb{E}_{\pi}\!\left[\Delta_i(\pi)\right]$ with $\pi$ uniform on $\mathrm{Perm}(\mathcal{N})$. Drawing $\pi_1,\dots,\pi_M$ independently and uniformly gives the following estimator, first used by \citet{mann1960values} and studied in more detail by \citet{castro2009polynomial}:
\begin{equation}
  \widehat{\Delta T}_i^{(M)}
    \;=\; \frac{1}{M}\sum_{m=1}^{M}\Delta_i(\pi_m),
  \label{eq:estimator}
\end{equation}
which is unbiased:
\begin{equation}
  \mathbb{E}\!\left[\widehat{\Delta T}_i^{(M)}\right]=\Delta T_i(v),
  \qquad
  \operatorname{Var}\!\left(\widehat{\Delta T}_i^{(M)}\right)
  =\frac{\sigma_i^2}{M},
  \qquad
  \sigma_i^2=\operatorname{Var}_{\pi}[\Delta_i(\pi)].
\end{equation}
The empirical variance of the sampled marginal contributions is
\begin{equation}
  s_i^2
  =
  \frac{1}{M-1}
  \sum_{m=1}^{M}
  \left(
  \Delta_i(\pi_m)-\widehat{\Delta T}_i^{(M)}
  \right)^2.
\end{equation}
Under independent permutation sampling, $s_i/\sqrt{M}$ is an estimated Monte Carlo standard error, and the scalar central limit theorem gives the pointwise asymptotic confidence interval (though non-asymptotic bounds are also available \citep{maleki2013bounding}):
\begin{equation}
  \widehat{\Delta T}_i^{(M)}
  \pm
  z_{1-\alpha/2}\frac{s_i}{\sqrt{M}}.
\end{equation}

A single permutation should be reused for all $n$ factors: the ordering
$\pi$ traces a maximal chain
\[
\varnothing = S_0 \subset S_1 \subset \cdots \subset S_n = \mathcal{N}.
\]
The successive differences $v(S_k)-v(S_{k-1})$ give one marginal-contribution observation for every factor. One path contains $n+1$ model states. Since the baseline and fully perturbed endpoints are shared by all paths, $M$ sampled paths require at most
\begin{equation}
  \min(2^n,\,2+M(n-1))
\end{equation}
distinct model configurations before accounting for overlap among intermediate states. Model responses should be cached when the coalition response is deterministic, because different sampled paths may pass through the same subset. Beyond efficiency, sharing the permutation makes the estimator preserve the closure of the attribution exactly at every finite sample size.

\begin{lemma}[Per-sample structural guarantees]
\label{lem:exact}
Let $\widehat{\Delta T}_1^{(M)},\dots,\widehat{\Delta T}_n^{(M)}$ be computed
from~\eqref{eq:estimator} using a common set of sampled permutations
$\pi_1,\dots,\pi_M$. Then, for every $M\ge 1$ and every realisation of the
sample:
\begin{enumerate}
  \item[(i)] \emph{Completeness / efficiency, exactly:}
  $\displaystyle\sum_{i=1}^{n}\widehat{\Delta T}_i^{(M)} = v(\mathcal{N})-v(\varnothing)=v(\mathcal{N})$.
  \item[(ii)] \emph{Null factor, exactly:} if $v(S\cup\{i\})=v(S)$ for all
  $S\subseteq \mathcal{N}\setminus\{i\}$, then $\widehat{\Delta T}_i^{(M)}=0$.
  \item[(iii)] \emph{Linearity, exactly:} for scalars $a,b$ and responses $v,w$,
  if the same sampled permutations are used for $av+bw$, $v$, and $w$, then
  \[
  \widehat{\Delta T}_i^{(M)}(av+bw)
  =
  a\,\widehat{\Delta T}_i^{(M)}(v)
  +
  b\,\widehat{\Delta T}_i^{(M)}(w).
  \]
\end{enumerate}
\end{lemma}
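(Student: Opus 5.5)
The plan is to argue path by path: for a single permutation, each of the three properties holds exactly. Each $\widehat{\Delta T}_i^{(M)}$ is then the same uniform average $\frac{1}{M}\sum_m$ of per-path quantities, with the same permutations used for every factor, and the averages inherit the properties. So the core is a statement about one permutation $\pi$ and its marginal contributions $\Delta_i(\pi)$ from \eqref{eq:marginal}.

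For (i), fix $\pi$. Let $\varnothing=S_0\subset S_1\subset\cdots\subset S_n=\mathcal{N}$ be the maximal chain it traces, with $S_k=S_{k-1}\cup\{\pi(k)\}$. For the factor $i=\pi(k)$ we have $P_i^\pi=S_{k-1}$, so $\Delta_{\pi(k)}(\pi)=v(S_k)-v(S_{k-1})$. Since $\pi$ is a bijection, summing over $i\in\mathcal{N}$ is the same as summing over $k=1,\dots,n$. The sum then telescopes to $v(\mathcal{N})-v(\varnothing)=v(\mathcal{N})$, using $v(\varnothing)=0$. Averaging over $m$ and exchanging the finite sums over $i$ and $m$ gives (i) for every $M$. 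The only point to stress is that this uses the \emph{common} set of permutations. If each factor had its own independent sample, the sums over $i$ and $m$ would not be the telescoping sums of whole paths, and the identity would fail.

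For (ii), suppose factor $i$ is null. For each $\pi$, the set $P_i^\pi$ does not contain $i$, so the hypothesis applied to $S=P_i^\pi$ gives $\Delta_i(\pi)=0$. Hence every term of the average is zero.

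For (iii), fix the permutations. Each $\Delta_i(\pi)$ is a difference of two evaluations of the response at the sets $P_i^\pi\cup\{i\}$ and $P_i^\pi$, and these sets depend only on $\pi$, not on the response. The map $v\mapsto\Delta_i(\pi)$ is therefore linear, and so is the average over $m$. All of this works verbatim for $V$-valued responses, since only vector-space operations are used.

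I expect no real obstacle. The one step requiring care is the bookkeeping in (i): correctly identifying $\Delta_{\pi(k)}(\pi)$ with the $k$-th increment along the chain, so that the telescoping is justified.
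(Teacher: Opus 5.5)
Your proposal is correct and follows essentially the same route as the paper's proof: telescoping each sampled path's marginals along its maximal chain for (i), noting that every marginal of a null factor vanishes for (ii), and observing that the estimator is a fixed linear functional of the response once the permutations are drawn for (iii). Your explicit identification $\Delta_{\pi(k)}(\pi)=v(S_k)-v(S_{k-1})$ spells out the bookkeeping that the paper leaves implicit.
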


\begin{proof}
For any single permutation $\pi$ the marginals~\eqref{eq:marginal} telescope along the chain it induces: $\sum_{i=1}^{n}\Delta_i(\pi) = \sum_{k=1}^{n}\bigl(v(S_k)-v(S_{k-1})\bigr) = v(\mathcal{N})-v(\emptyset)$. Averaging over $\pi_1,\dots,\pi_M$ preserves this identity term by term, giving~(i). For~(ii), a null factor has $\Delta_i(\pi)=0$ for every $\pi$, hence $\widehat{\Delta T}_i^{(M)}=0$ before any averaging. Statement~(iii) is immediate because~\eqref{eq:estimator} is a fixed linear functional of the response once the common permutations are drawn.
\end{proof}

Completeness/efficiency, no residual/purity, and the null-factor property are exact. Linearity is exact when common sampled paths are used. Endpoint-reversal antisymmetry is exact only for a reversal-closed sampling design, meaning that every sampled ordering $\pi=(i_1,\ldots,i_n)$ is included with the same weight as its reverse $\pi^R=(i_n,\ldots,i_1)$. Factor-exchange symmetry is generally exact only in expectation unless the sampling design is label-symmetric. Path independence, or uniqueness, is also asymptotic rather than finite-sample: the estimator depends on the sampled paths at finite $M$, although its expectation is the unique Shapley attribution.

For deterministic scalar responses with bounded marginal contributions and independent permutation draws, the number of sampled permutations needed to control all factor-level Monte Carlo errors grows only logarithmically in the number of factors.

\begin{proposition}[Sample complexity]
\label{prop:complexity}
Suppose the marginal contributions have bounded range
\begin{align}
  r = \max_i\left(\max_{\pi}\Delta_i(\pi) - \min_{\pi}\Delta_i(\pi)\right).
\end{align}

Fix $\varepsilon>0$ and $\delta\in(0,1)$. If
\begin{equation}
  M \;\ge\; \frac{r^{2}}{2\varepsilon^{2}}\,\log\frac{2n}{\delta},
  \label{eq:hoeffding}
\end{equation}
then $\max_i\bigl|\widehat{\Delta T}_i^{(M)}-\Delta T_i(v)\bigr|\le\varepsilon$ with
probability at least $1-\delta$.
\end{proposition}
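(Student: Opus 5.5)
The plan is Hoeffding's inequality for each factor separately, followed by a union bound over the $n$ factors. Fix a factor $i$. The estimator $\widehat{\Delta T}_i^{(M)}$ in \eqref{eq:estimator} is the empirical mean of the i.i.d.\ variables $X_m=\Delta_i(\pi_m)$, $m=1,\dots,M$, where $\pi_m$ is uniform on $\mathrm{Perm}(\mathcal{N})$. The response is deterministic and scalar, so each $X_m$ is a fixed function of $\pi_m$. It therefore lies in the interval $[a_i,b_i]$ with $a_i=\min_\pi\Delta_i(\pi)$ and $b_i=\max_\pi\Delta_i(\pi)$, and $b_i-a_i\le r$ by the definition of $r$. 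The common mean is $\mathbb{E}_\pi[\Delta_i(\pi)]=\Delta T_i(v)$, which is the permutation representation from the proof of Theorem~\ref{thm:main}.

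Hoeffding's inequality for bounded i.i.d.\ variables then gives
\[
\Pr\Bigl(\bigl|\widehat{\Delta T}_i^{(M)}-\Delta T_i(v)\bigr|>\varepsilon\Bigr)\le 2\exp\!\left(-\frac{2M\varepsilon^2}{(b_i-a_i)^2}\right)\le 2\exp\!\left(-\frac{2M\varepsilon^2}{r^2}\right).
\]
If $r=0$, every $\Delta_i(\pi)$ is constant, the estimator is exact, and there is nothing to prove, so assume $r>0$. The events for different $i$ are dependent, because the same permutations $\pi_1,\dots,\pi_M$ are reused for all factors as recommended before Lemma~\ref{lem:exact}. The union bound does not need independence, so it bounds the probability that some factor deviates by more than $\varepsilon$ by $2n\exp(-2M\varepsilon^2/r^2)$.

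It remains to choose $M$ so that $2n\exp(-2M\varepsilon^2/r^2)\le\delta$. This is equivalent to $M\ge \frac{r^2}{2\varepsilon^2}\log\frac{2n}{\delta}$, which is exactly \eqref{eq:hoeffding}. On the complementary event, which has probability at least $1-\delta$, we have $\max_i|\widehat{\Delta T}_i^{(M)}-\Delta T_i(v)|\le\varepsilon$.

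No step is a genuine obstacle. The only points that need care are, first, to apply Hoeffding with the one-dimensional range bound $r$ (which uses the scalar-response hypothesis) and with the unbiasedness already established. Second, one must note explicitly that sharing permutations across factors makes the per-factor errors dependent, so the union bound, not a product of probabilities, is the correct way to combine them.
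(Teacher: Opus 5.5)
Your proof is correct and follows the paper's argument: Hoeffding's inequality applied to the i.i.d.\ bounded marginals $\Delta_i(\pi_m)$ for each factor, a union bound over the $n$ factors, and solving $2n\exp(-2M\varepsilon^2/r^2)\le\delta$ for $M$. You also note explicitly the degenerate case $r=0$ and why the union bound is still valid when permutations are shared across factors, two points the paper leaves implicit.
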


\begin{proof}
For fixed $i$, Hoeffding's inequality applied to the bounded i.i.d.\ terms $\Delta_i(\pi_m)$ gives 
\begin{align}
  \mathbb{P}(|\widehat{\Delta T}_i^{(M)}-\Delta T_i(v)|\ge\varepsilon) \le 2\exp(-2M\varepsilon^{2}/r^{2}).
\end{align}
A union bound over the $n$ factors, set equal to $\delta$, yields~\eqref{eq:hoeffding}.
\end{proof}

Combining Proposition~\ref{prop:complexity} with the per-permutation cost, all $n$ factor contributions can be controlled to accuracy $\varepsilon$ uniformly with high probability using worst-case configuration complexity
\begin{equation}
  O\!\left(n r^2 \varepsilon^{-2}\log(n/\delta)\right),
\end{equation}
against $2^n$ configurations for exact recovery of the full response table. The number of sampled permutations $M$ is logarithmic in $n$; the total simulation cost also includes the $O(n)$ model states along each sampled path. Note that in practice, one generally does not know the marginal range $r$ in advance. If only a response range $R=\max_S v(S)-\min_S v(S)$ is known, then each marginal contribution lies in $[\min_S v(S)-\max_S v(S),\,\max_S v(S)-\min_S v(S)]$, giving the conservative bound $r\leq 2R$, not $r\leq R$. In applications, one can also monitor the empirical variance of the sampled marginal contributions.

A further useful technique from the Shapley-sampling literature is reversal-paired, or \emph{antithetic}, sampling, introduced by \cite{lomeli2019antithetic} and applied to Shapley values by \cite{mitchell2022sampling}. For a permutation $\pi=(i_1,\ldots,i_n)$, write $\pi^R=(i_n,\ldots,i_1)$ for its reversal and define
\begin{equation}
  \bar{\Delta}_i(\pi)
  =
  \frac{1}{2}\left\{\Delta_i(\pi)+\Delta_i(\pi^R)\right\}.
  \label{eq:paired_marginal}
\end{equation}
Drawing $K$ independent uniform permutations $\pi_1,\ldots,\pi_K$ gives the paired estimator
\begin{equation}
  \widehat{\Delta T}_{i,\mathrm{pair}}^{(K)}
  =
  \frac{1}{K}\sum_{k=1}^{K}\bar{\Delta}_i(\pi_k).
  \label{eq:paired_estimator}
\end{equation}
This estimator is unbiased because both $\pi$ and $\pi^R$ are uniformly distributed when $\pi$ is uniform. It uses $2K$ path evaluations. Let
\begin{equation}
  \gamma_i
  =
  \operatorname{Cov}_{\pi}\!\left(\Delta_i(\pi),\Delta_i(\pi^R)\right).
\end{equation}
Since $\Delta_i(\pi)$ and $\Delta_i(\pi^R)$ have the same variance $\sigma_i^2$,
\begin{equation}
  \operatorname{Var}\!\left(\widehat{\Delta T}_{i,\mathrm{pair}}^{(K)}\right)
  =
  \frac{\sigma_i^2+\gamma_i}{2K}.
  \label{eq:paired_variance}
\end{equation}
The independent estimator with the same number $2K$ of path evaluations has variance $\sigma_i^2/(2K)$. Thus reversal pairing reduces variance for factor $i$ exactly when $\gamma_i<0$; if the covariance is positive it can increase variance. The gain is therefore not automatic, but it is a standard antithetic-sampling effect. Appendix~\ref{app:example} gives a small illustration of this variance reduction, and Figure~\ref{fig:fair_sampling} shows its effect in the FaIR experiment. 

\section{FaIR illustration with a threshold-response metric}
\label{sec:fair_illustration}

We illustrate the sampling method with the FaIR reduced-complexity climate model \citep{leach2021fair} driven by RCMIP input data \citep{nicholls2020rcmip}. The experiment uses the SSP5--8.5 scenario and $n=14$ binary forcing factors so that the exact Shapley values can still be computed ($2^{14}=16384$ model evaluations) and compared to the sampling estimates. Three factors, $\mathrm{CO}_2$, $\mathrm{CH}_4$, and $\mathrm{N}_2\mathrm{O}$, are prescribed as FaIR concentration-driven greenhouse-gas species. The remaining eleven factors are prescribed effective-radiative-forcing components: aerosol--radiation interactions, aerosol--cloud interactions, tropospheric ozone, stratospheric ozone, land-use albedo, light-absorbing particles on snow and ice, stratospheric water vapour from methane oxidation, contrails, other well-mixed greenhouse gases, solar forcing, and volcanic forcing. A concentration factor is absent when it is held at its 1750 value; a forcing factor is absent when its anomaly from 1750 is set to zero. The notebook used to reproduce the FaIR experiment and all figures in this section is available with the replication code \citep{climate_interactions_code}.

A natural and climate-relevant nonlinear response is the cumulative threshold-exceedance or overshoot-degree-year quantity commonly used in scenario assessment \citep{kikstra2022ar6}. For coalition $S$, let $T_S(t)$ be the FaIR surface-temperature trajectory and let $T_\emptyset(t)$ be the trajectory for the empty-factor baseline. We define
\begin{equation}
  F(S)
  =
  \int_{1750}^{2100}
  \max\!\left(T_S(t)-T_\emptyset(t)-2^\circ\mathrm{C},\,0\right)\,dt,
  \label{eq:threshold_response}
\end{equation}
measured in degree-years, calculated under trapezoidal integration on annual time steps. The baseline-normalised game is again $v(S)=F(S)-F(\emptyset)$; in this run $F(\emptyset)=0$.

The full factorial table contains $2^{14}=16384$ FaIR coalitions. Exact evaluation gives $v(\mathcal{N})=141.30$ degree-years above the 2$^\circ$C threshold. The sum of the singleton responses is $108.85$ degree-years, so the non-additive residual is $32.45$ degree-years, or $23.0\%$ of the full response. The exact Shapley attributions are shown in Figure~\ref{fig:fair_shapley}. They sum to the full threshold-exceedance response, as required by efficiency. 

The same figure also shows the convergence behaviour of the two largest Shapley values: $\mathrm{CO}_2$ and $\mathrm{CH}_4$, with exact Shapley values of $126.81$ and $17.45$ degree-years, respectively. The sampled running estimates are accompanied by pointwise normal Monte Carlo intervals computed from the observed marginal-contribution variances, and show that the estimates quickly converge to the exact values with increasing sample size.

\begin{figure}[t]
  \centering
  \begin{subfigure}[t]{0.49\linewidth}
    \centering
    \includegraphics[width=\linewidth]{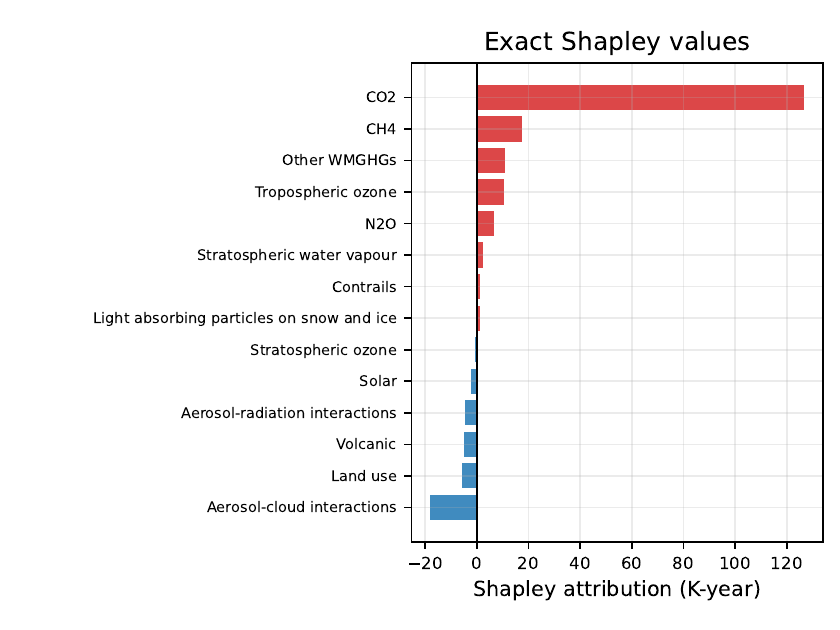}
    \caption{Exact 14-factor Shapley attributions calculated from the $2^{14}=16384$ FaIR model evaluations.}
    \label{fig:fair_shapley_values}
  \end{subfigure}
  \hfill
  \begin{subfigure}[t]{0.49\linewidth}
    \centering
    \includegraphics[width=\linewidth]{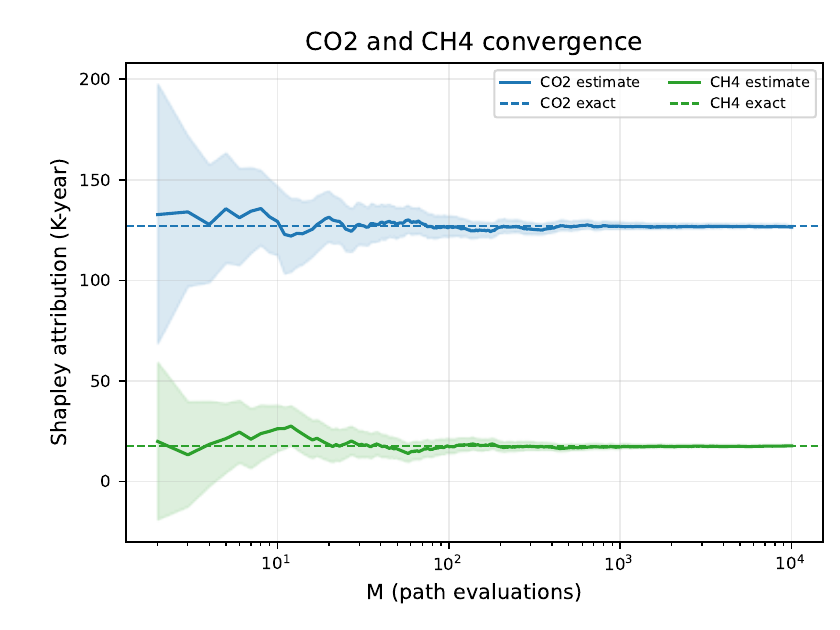}
    \caption{Running sampled estimates for $\mathrm{CO}_2$ and $\mathrm{CH}_4$ with pointwise 95\% Monte Carlo intervals. Dashed lines are the exact full-table values.}
    \label{fig:fair_ghg_ci}
  \end{subfigure}
  \caption{Exact and sampled Shapley attributions for the FaIR threshold-response experiment. Positive values increase cumulative degree-years above 2$^\circ$C by 2100; negative values reduce them.}
  \label{fig:fair_shapley}
\end{figure}

Figure~\ref{fig:fair_sampling} shows the corresponding sampling behaviour and the computational tradeoff. A model evaluation is one distinct FaIR coalition run, after caching repeated coalitions visited by different sampled paths. The error metric is the maximum absolute factor error, so it is a stringent absolute-error diagnostic but should not be read as guaranteeing small relative error for factors with Shapley values near zero. Across 200 independent sampling replications at $M=25$, the median estimate used 292 model evaluations, saved $98.2\%$ of the full-table runs, and had median maximum factor error $4.76$ degree-years. This is adequate for identifying the dominant contributions, but not for precise attribution of the smaller terms: aggregating over factor--trial observations, factors with exact $|\phi_i|<5$ degree-years had median relative absolute errors of about $14\%$, compared with about $10\%$ for factors with $|\phi_i|\geq 10$ degree-years. Reversal-paired sampling gave a better cost--accuracy point: at $M=50$ it used a median of 546 model evaluations, saved $96.7\%$ of the full table, and had median maximum error $0.52$ degree-years; the corresponding median relative errors were about $1.6$--$2.0\%$ for factors with $|\phi_i|<5$ and $1.1\%$ for factors with $|\phi_i|\geq 10$. Increasing to $M=10000$ reduced the median error to $0.253$ degree-years for independent sampling and $0.0388$ degree-years for reversal-paired sampling, but used a median of 15956 cached evaluations, only $2.6\%$ fewer than the exact table.

\begin{figure}[t]
  \centering
  \includegraphics[width=\linewidth]{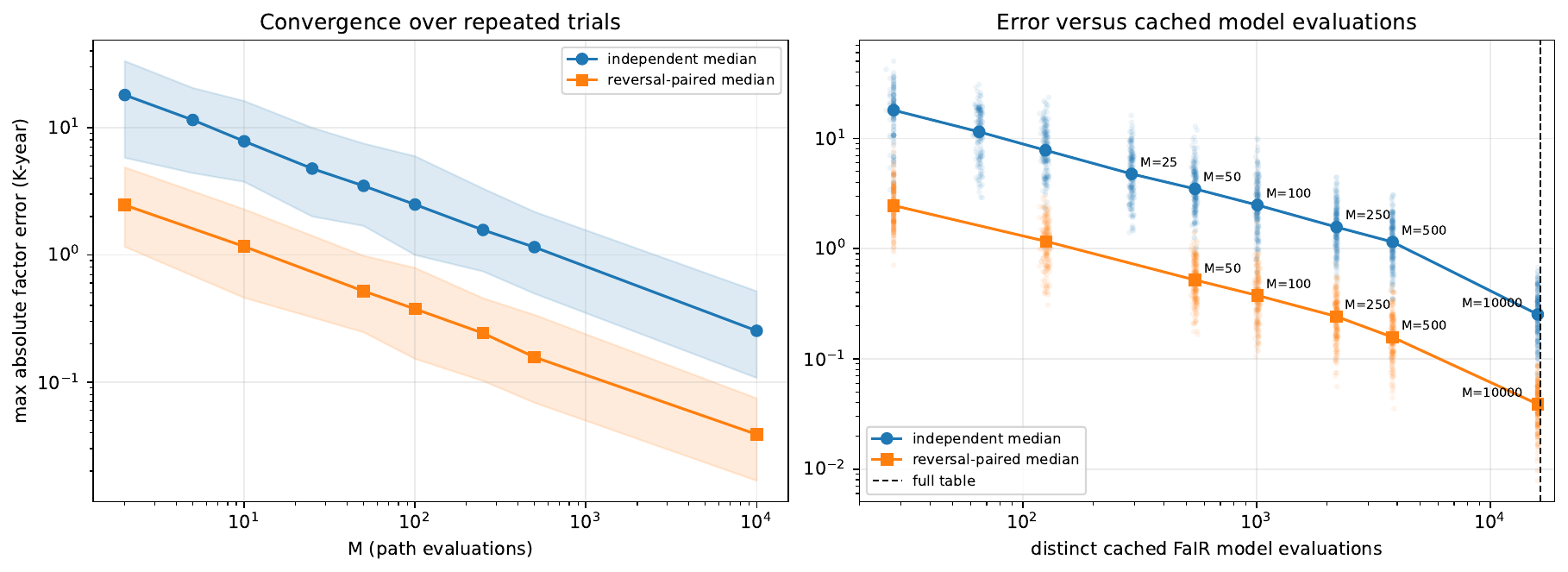}
  \caption{Sampling performance for the FaIR threshold-response experiment. Left: convergence to the exact Shapley values, shown as the median maximum absolute factor error over 200 independent sampling replications with 5th--95th percentile bands. Right: the same final errors plotted against actual computational cost, measured as distinct cached FaIR coalition evaluations. The dashed vertical line marks the 16384 model evaluations required by the exact full table.}
  \label{fig:fair_sampling}
\end{figure}

\section{Conclusion}
We have identified the factor interactions of \citet{stein1993factor} with the Boolean Möbius coefficients, or the Harsanyi dividends, of the baseline-normalised model response. Under this identification, the shared-interaction factorisation of \cite{lunt2021multi} is the dividend representation of the Shapley value, while their linear-sum factorisation is its permutation representation.

The two factorisations therefore coincide for any number of factors, proving the conjecture of \citet{lunt2021multi}. While this study was motivated by this conjecture, the main result is the identification of those two factor-separation attributions with a well-studied mathematical structure. This gives climate scientists a principled way to use established Shapley-value estimators and variance-reduction schemes \citep{castro2009polynomial,maleki2013bounding,castro2017improving,mitchell2022sampling}. 

The FaIR threshold-response illustration shows the practical distinction between exact interaction recovery and approximate factor attribution. Exact recovery required 16384 model evaluations, while sampled paths produced closed Shapley estimates at a tunable cost. Low sample sizes gave large savings but only coarse attribution: at $M=25$, the median maximum absolute error was about $4.8$ degree-years, which is small relative to the total response but material for several smaller factors. In relative terms, the less dominant factors had somewhat larger errors than the dominant factors. Reversal-paired sampling improved this tradeoff, reaching a median maximum error of about $0.5$ degree-years with 546 cached evaluations. The example is deliberately small enough to compute the exact benchmark, but it illustrates the scaling advantage that becomes essential when each coalition response is a costly climate-model simulation.

By framing the Stein--Alpert decomposition as a Möbius inversion, this work also connects factor separation to a wider literature on Möbius inversions and Shapley values for complex systems \citep{jansma2025mereological}, including extensions to vector-valued functions like wind velocity fields or chemical fluxes \citep{forre2025m}. 

While we have provided sampling error bounds for the permutation estimator, these are valid for deterministic models; we have not addressed the additional uncertainty from internal variability, finite integration, and parameter or structural uncertainty. Still, since the Shapley functional is linear, zero-mean noise in the model response only adds variance but does not bias the attribution.
By connecting the factor-separation literature to the Möbius inversion and Shapley value literature, we hope to encourage further work on uncertainty quantification/reduction, sampling methods, and interaction estimation in climate-model factor separation.

\appendix

\section{Three-factor sampling example}
\label{app:example}

Take $n=3$ factors with the baseline-normalised response in Table~\ref{tab:example}, a mildly superadditive game in which each pair and the triple carry one unit of irreducible interaction. The Möbius inversion~\eqref{eq:mobius} gives singleton dividends $\hat v(\{1\}),\hat v(\{2\}),\hat v(\{3\})=1,2,3$ and $\hat v(S)=1$ for every pair and for the triple. Both routes of Theorem~\ref{thm:main} return the same attribution, $\Delta T_1,\Delta T_2,\Delta T_3=\tfrac{7}{3},\tfrac{10}{3},\tfrac{13}{3}$, which sum to $10=v(\mathcal{N})-v(\varnothing)$ as required. Each of the six permutations telescopes exactly to $10$, illustrating Lemma~\ref{lem:exact}(i). The six permutation marginals for factor $1$ are $\{1,1,2,4,2,4\}$ with population variance $\sigma_1^{2}=14/9$; pairing each permutation with its reversal collapses these to the three reversal-pair means $\{\tfrac52,\tfrac52,2\}$, with variance $1/18$. Equivalently, $\operatorname{Cov}(\Delta_1(\pi),\Delta_1(\pi^R))=-13/9<0$, so~\eqref{eq:paired_variance} gives the variance reduction.

\begin{table}[t]
  \centering
  \begin{tabular}{lcccccccc}
    $S$    & $\varnothing$ & $\{1\}$ & $\{2\}$ & $\{3\}$
           & $\{1,2\}$ & $\{1,3\}$ & $\{2,3\}$ & $\{1,2,3\}$ \\
    $v(S)$ & $0$ & $1$ & $2$ & $3$ & $4$ & $5$ & $6$ & $10$ \\
  \end{tabular}
  \caption{A three-factor baseline-normalised response used to illustrate the
  sampled attribution. Both the dividend and permutation representations return
  $(\Delta T_1,\Delta T_2,\Delta T_3)=(\tfrac73,\tfrac{10}{3},\tfrac{13}{3})$,
  summing exactly to $v(\mathcal{N})-v(\varnothing)=10$.}
  \label{tab:example}
\end{table}

\bibliographystyle{plainnat}
\bibliography{refs}

@article{stein1993factor,
  title={Factor separation in numerical simulations},
  author={Stein, U and Alpert, P},
  journal={Journal of Atmospheric Sciences},
  volume={50},
  number={14},
  pages={2107--2115},
  year={1993}
}

@article{lunt2021multi,
  title={Multi-variate factorisation of numerical simulations},
  author={Lunt, Daniel J. and Chandan, Deepak and Haywood, Alan M. and Lunt, George M. and Rougier, Jonathan C. and Salzmann, Ulrich and Schmidt, Gavin A. and Valdes, Paul J.},
  journal={Geoscientific Model Development},
  volume={14},
  pages={4307--4317},
  year={2021},
  doi={10.5194/gmd-14-4307-2021}
}

@article{leach2021fair,
  title={{FaIR}v2.0.0: a generalized impulse response model for climate uncertainty and future scenario exploration},
  author={Leach, Nicholas J. and Jenkins, Stuart and Nicholls, Zebedee and Smith, Christopher J. and Lynch, John and Cain, Michelle and Walsh, Thomas and Wu, Buwen and Tsutsui, Junichi and Allen, Myles R.},
  journal={Geoscientific Model Development},
  volume={14},
  pages={3007--3036},
  year={2021},
  doi={10.5194/gmd-14-3007-2021}
}

@article{nicholls2020rcmip,
  title={Reduced Complexity Model Intercomparison Project Phase 1: introduction and evaluation of global-mean temperature response},
  author={Nicholls, Zebedee R. J. and Meinshausen, Malte and Lewis, Jared and Gieseke, Robert and Dommenget, Dietmar and Dorheim, Kalyn and Fan, Chien-So and Fuglestvedt, Jan S. and Gasser, Thomas and Gol{\"u}ke, Ulrike and Goodwin, Philip and Hartin, Corinne and Hope, Anthony P. and Kriegler, Elmar and Leach, Nicholas J. and Marchegiani, David and McBride, Laura A. and Quilcaille, Yann and Rogelj, Joeri and Salawitch, Ross J. and Samset, Bj{\o}rn H. and Sandstad, Marit and Shiklomanov, Alexey N. and Skeie, Ragnhild B. and Smith, Christopher J. and Smith, Steven and Tanaka, Katsumasa and Tsutsui, Junichi and Xie, Zhen},
  journal={Geoscientific Model Development},
  volume={13},
  pages={5175--5190},
  year={2020},
  doi={10.5194/gmd-13-5175-2020}
}

@article{kikstra2022ar6,
  title={The {IPCC} Sixth Assessment Report {WGIII} climate assessment of mitigation pathways: from emissions to global temperatures},
  author={Kikstra, Jarmo S. and Nicholls, Zebedee R. J. and Smith, Christopher J. and Lewis, Jared and Lamboll, Robin D. and Byers, Edward and Sandstad, Marit and Meinshausen, Malte and Gidden, Matthew J. and Rogelj, Joeri and Kriegler, Elmar and Peters, Glen P. and Fuglestvedt, Jan S. and Skeie, Ragnhild B. and Samset, Bj{\o}rn H. and Wienpahl, Laura and van Vuuren, Detlef P. and van der Wijst, Kaj-Ivar and Al Khourdajie, Alaa and Forster, Piers M. and Reisinger, Andy and Schaeffer, Roberto and Riahi, Keywan},
  journal={Geoscientific Model Development},
  volume={15},
  pages={9075--9109},
  year={2022},
  doi={10.5194/gmd-15-9075-2022}
}

@article{jansma2025mereological,
  title={Mereological approach to higher-order structure in complex systems: From macro to micro with {M}{\"o}bius},
  author={Jansma, Abel},
  journal={Physical Review Research},
  volume={7},
  number={2},
  pages={023016},
  year={2025},
  publisher={APS}
}

@article{forre2025m,
  title={M{\"o}bius transforms and {S}hapley values for vector-valued functions on weighted directed acyclic multigraphs},
  author={Forr{\'e}, Patrick and Jansma, Abel},
  journal={arXiv preprint arXiv:2510.05786},
  year={2025}
}

@incollection{harsanyi1959bargaining,
  title={A Bargaining Model for the Cooperative n-Person Game},
  author={Harsanyi, John C.},
  booktitle={Contributions to the Theory of Games IV},
  editor={Tucker, A. W. and Luce, R. D.},
  pages={325--355},
  year={1959},
  publisher={Princeton University Press}
}

@incollection{shapley1953value,
  title={A Value for n-Person Games},
  author={Shapley, Lloyd S.},
  booktitle={Contributions to the Theory of Games II},
  editor={Kuhn, H. W. and Tucker, A. W.},
  pages={307--317},
  year={1953},
  publisher={Princeton University Press}
}

@article{cleveland2020factor,
  title={Factor Effects in Numerical Simulations},
  author={Cleveland, Judah L. and Smith, Jeffrey A. and Collins, James P.},
  journal={Journal of the Atmospheric Sciences},
  volume={77},
  number={7},
  pages={2439--2451},
  year={2020},
  doi={10.1175/JAS-D-19-0263.1}
}

@article{albrecht2002shapley,
  title={A {S}hapley Decomposition of Carbon Emissions Without Residuals},
  author={Albrecht, Johan and Fran{\c{c}}ois, Delphine and Schoors, Koen},
  journal={Energy Policy},
  volume={30},
  number={9},
  pages={727--736},
  year={2002},
  doi={10.1016/S0301-4215(01)00131-8}
}

@article{alkhourdajie2024climate,
  title={Climate Ambition, Background Scenario or the Model? Attribution of the Variance of Energy-Related Indicators in Global Scenarios},
  author={Al Khourdajie, Alaa and Skea, Jim and Green, Richard},
  journal={Energy and Climate Change},
  volume={5},
  pages={100126},
  year={2024},
  doi={10.1016/j.egycc.2024.100126}
}

@article{owen2014sobol,
  title={{Sobol'} Indices and {S}hapley Value},
  author={Owen, Art B.},
  journal={SIAM/ASA Journal on Uncertainty Quantification},
  volume={2},
  pages={245--251},
  year={2014},
  doi={10.1137/130936233}
}

@article{song2016shapley,
  title={Shapley Effects for Global Sensitivity Analysis: Theory and Computation},
  author={Song, Eunhye and Nelson, Barry L. and Staum, Jeremy},
  journal={SIAM/ASA Journal on Uncertainty Quantification},
  volume={4},
  number={1},
  pages={1060--1083},
  year={2016},
  doi={10.1137/15M1048070}
}

@article{castro2009polynomial,
  title={Polynomial calculation of the {S}hapley value based on sampling},
  author={Castro, Javier and G{\'o}mez, Daniel and Tejada, Juan},
  journal={Computers \& Operations Research},
  volume={36}, number={5}, pages={1726--1730}, year={2009}}

@article{maleki2013bounding,
  title={Bounding the estimation error of sampling-based {S}hapley value approximation},
  author={Maleki, Sasan and Tran-Thanh, Long and Hines, Greg and Rahwan, Talal and Rogers, Alex},
  journal={arXiv preprint arXiv:1306.4265}, year={2013}}

@article{castro2017improving,
  title={Improving polynomial estimation of the {S}hapley value by stratified random sampling with optimum allocation},
  author={Castro, Javier and G{\'o}mez, Daniel and Molina, Elisenda and Tejada, Juan},
  journal={Computers \& Operations Research},
  volume={82}, pages={180--188}, year={2017}}

@article{mitchell2022sampling,
  title={Sampling permutations for {S}hapley value estimation},
  author={Mitchell, Rory and Cooper, Joshua and Frank, Eibe and Holmes, Geoffrey},
  journal={Journal of Machine Learning Research},
  volume={23}, number={43}, pages={1--46}, year={2022}}

@article{rota1964foundations,
  title={On the foundations of combinatorial theory I. Theory of {M}{\"o}bius Functions},
  author={Rota, Gian-Carlo},
  journal={Zeitschrift f{\"u}r Wahrscheinlichkeitstheorie und Verwandte Gebiete},
  volume={2},
  number={4},
  pages={340--368},
  year={1964},
  publisher={Springer Science and Business Media LLC}
}

@article{lundberg2017unified,
  title={A unified approach to interpreting model predictions},
  author={Lundberg, Scott M and Lee, Su-In},
  journal={Advances in neural information processing systems},
  volume={30},
  year={2017}
}

@book{mann1960values,
  title={Values of large games, IV: Evaluating the electoral college by Montecarlo techniques},
  author={Mann, Irwin and Shapley, Lloyd S},
  year={1960},
  publisher={Rand Corporation}
}

@misc{climate_interactions_code,
  title={Replication notebook for climate-model Shapley experiments},
  author={Abel Jansma},
  year={2026},
  howpublished={\href{https://github.com/AJnsm/climate_shapley_values}{\url{https://github.com/AJnsm/climate_shapley_values}}}
}

@article{lomeli2019antithetic,
  title={Antithetic and Monte Carlo kernel estimators for partial rankings},
  author={Lomeli, Maria and Rowland, Mark and Gretton, Arthur and Ghahramani, Zoubin},
  journal={Statistics and Computing},
  volume={29},
  number={5},
  pages={1127--1147},
  year={2019},
  publisher={Springer}
}

\end{document}